\newtheorem{theorem}{Theorem}[section]
\newtheorem{theoremx}{Theorem}
\newtheorem{lemma}[theorem]{Lemma}
\theoremstyle{definition}
\newtheorem{definition}[theorem]{Definition}
\newtheorem{remark}[theorem]{Remark}
\numberwithin{equation}{section}
\global\long\def\C{\mathbb{C}}
\global\long\def\R{\mathbb{R}}
\global\long\def\F{\mathbb{F}}
\global\long\def\mf#1{\mathfrak{#1}}
\global\long\def\mc#1{\mathcal{#1}}
\global\long\def\.{,\dots ,}
\global\long\def\so{\mathfrak{so}}
\global\long\def\Ad{\mathrm{Ad}}
\global\long\def\ad{\mathrm{ad}}
\global\long\def\so{\mathfrak{so}}
\global\long\def\h{\mathfrak{h}}
\global\long\def\m{\mathfrak{m}}
\global\long\def\n{\mathfrak{n}}
\global\long\def\p{\mathfrak{p}}
\global\long\def\End{\operatorname{End}}
\global\long\def\sd{\mathrm{d}}
\global\long\def\<{\langle}
\global\long\def\>{\rangle}
\author{Reinier Storm}
\address{KU Leuven, Department of Mathematics, Celestijnenlaan 200B -- Box 2400, BE-3001 Leuven, Belgium} 
\email{reinier.storm@kuleuven.be}
\thanks{This work is supported by project 3E160361 of the KU Leuven Research Fund and by the EOS research project 3E180171.}
\keywords{}
\subjclass[2010]{Primary 53C28, Secondary 53C42} 
\begin{document}
	
\title[Lagrangian submanifolds and superminimal surfaces]{A note on Lagrangian submanifolds of twistor spaces and their relation to superminimal surfaces} 

\begin{abstract}
In this paper a bijective correspondence between superminimal surfaces of an oriented Riemannian $4$-manifold and particular Lagrangian submanifolds of the twistor space over the $4$-manifold is proven.
More explicitly, for every superminimal surface a submanifold of the twistor space is constructed which is Lagrangian for all the natural almost Hermitian structures on the twistor space. 
The twistor fibration restricted to the constructed Lagrangian gives a circle bundle over the superminimal surface.
Conversely, if a submanifold of the twistor space is Lagrangian for all the natural almost Hermitian structures,
then the Lagrangian projects to a superminimal surface and is is contained in the Lagrangian constructed from this surface. 
In particular this produces many Lagrangian submanifolds of the twistor spaces $\mathbb{C} P^3$ and $\mathbb{F}_{1,2}(\mathbb{C}^3)$ with respect to both the Kähler structure as well as the nearly Kähler structure. 
Moreover, it is shown that these Lagrangian submanifolds are minimal submanifolds.
\end{abstract}
\maketitle
	
\section{Introduction}
The twistor space of an oriented Riemannian $4$-manifold is inspired by Penrose's twistor program and introduced in \cite{Atiyah1978}, where the authors use it to classify self-dual solutions of the Yang-Mills equations on $S^4$. 
By now many other spaces which also go by the name of twistor spaces have been constructed; see for example \cite{Salamon1982,Alekseevsky1993, Burstall1990, Berard-Bergery1984,OBrian1985}.  
These twistor spaces have, among other things, been used to study all kinds of harmonic maps and relate these to holomorphic maps; see \cite{Eells1983,Eells1985, Salamon1985}.
Particular instances of this are superminimal surfaces in $4$-manifolds.
These are minimal submanifolds and thus yield a harmonic immersion.
These types of surfaces had been extensively studied from different points of view; see \cite{Boruvka1928, Eisenhart1912, Kommerell1905,Kwietniewski1902}. 
Bryant proved in \cite{Bryant1982} that every compact Riemann surface admits a superminimal immersion into $S^4$ using the twistor fibration $\C P^3\to S^4$.
This inspired many studies of minimal surfaces from this twistor bundle point of view; see for example \cite{Friedrich1984, Chi1996, Loo1989, Jensen1989}. 
There are not many results about Lagrangian submanifolds of these twistor spaces. 
In \cite{Castro2001} Lagrangian submanifolds of the Kähler $\C P^3$ are constructed, which contain the ones constructed in this paper.

\subsection{Results}
In \cite{Storm2019} all totally geodesic and all homogeneous Lagrangian submanifolds of $\F_{1,2}(\C^3)$, the nearly Kähler manifold of full flags in $\C ^3$, are classified.
The flag manifold $\F_{1,2}(\C^3)$ is the twistor space of $\C P^2$.
This paper started out as a continuation of the study of Lagrangian submanifolds in $\F_{1,2}(\C^3)$. 
The relation found between superminimal surfaces of $\C P^2$ and Lagrangian submanifold of $\F_{1,2}(\C^3)$ turns out to apply for any twistor space $Z$ of an oriented Riemannian $4$-manifold. 
For this we have to define almost Hermitian structures on the twistor space. 
The twistor spaces come equipped with two natural almost complex structures, which we denote by $J^\pm$; see $\eqref{eq:Jpm}$ below.
In \eqref{eq:metric} we pick a natural family of metrics $g_\lambda$ for $\lambda >0$ on $Z$ making $(Z,g_\lambda,J^\pm)$ an almost Hermitian space.
We will prove the following theorem in \Cref{sec:lag lift}.
\begin{theoremx}\label{thm:Lag and superminimal}
For a superminimal surface $\Sigma\subset M^4$ there exists a submanifold $L_\Sigma \subset Z$ which is a Lagrangian submanifold for all the almost Hermitian structures $(g_\lambda,J^\pm)$ simultaneously.
This Lagrangian $L_\Sigma$ projects under the twistor fibration to $\Sigma$ and the restriction of the twistor fibration to $L_\Sigma$ determines a circle bundle over $\Sigma$.
Conversely, given a submanifold $L\subset Z$ of the twistor space which is Lagrangian with respect to all the almost Hermitian structures $(g_\lambda,J^\pm)$ simultaneously, then the twistor fibration projects $L$ to a surface $\Sigma$ which is superminimal and $L$ is contained in $L_\Sigma$.
\end{theoremx}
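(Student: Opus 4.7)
My plan is to construct $L_\Sigma$ by taking, at each point $x \in \Sigma$, the circle of orthogonal complex structures of $T_xM$ that send $T_x\Sigma$ to $N_x\Sigma$. Concretely, I would set
\[
L_\Sigma = \{ j \in Z : \pi(j) \in \Sigma \text{ and } j(T_{\pi(j)}\Sigma) = N_{\pi(j)}\Sigma \},
\]
and check in a positively oriented orthonormal frame adapted to the splitting $T_x\Sigma \oplus N_x\Sigma$ that every such $j$ has block form $\begin{pmatrix} 0 & -A^T \\ A & 0 \end{pmatrix}$ with $A \in SO(2)$. This identifies each fibre with $S^1$ and shows that $L_\Sigma$ is a $3$-dimensional submanifold of $Z$ which projects onto $\Sigma$ as a circle bundle.

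Next, I would describe $T_jL_\Sigma$ using the Levi-Civita splitting $TZ = H \oplus V$, writing any tangent vector as $v = v_h + v_v$ with $v_h \in H_j \cong T_{\pi(j)}M$ and $v_v \in V_j$. The key input is that superminimality of $\Sigma$ is equivalent to horizontality of the Gauss lift $x \mapsto J_\Sigma(x) \in Z_x$ defined by $\pi/2$-rotation on both $T_x\Sigma$ and $N_x\Sigma$; equivalently, parallel transport along $\Sigma$ preserves the splitting $T\Sigma \oplus N\Sigma$ and hence the fibre circle of $L_\Sigma$. This forces the horizontal lift $\widetilde{T_x\Sigma} \subset H_j$ to be tangent to $L_\Sigma$, yielding the clean decomposition $T_jL_\Sigma = \widetilde{T_x\Sigma} \oplus T_jF_x$, where $F_x$ denotes the fibre. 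The Lagrangian property then follows at once: since $J^\pm$ preserves the $H$/$V$ splitting, acting as $j$ on $H$ and as $\pm J^+_{\mathrm{vert}}$ on $V$, and $g_\lambda$ is block diagonal for this splitting,
\[
g_\lambda(J^\pm v, w) = g(jv_h, w_h) + \lambda\, g_{S^2}(J^\pm_{\mathrm{vert}} v_v, w_v).
\]
The first summand vanishes because $j(T_x\Sigma) = N_x\Sigma$ is $g$-orthogonal to $T_x\Sigma$, and the second vanishes because $v_v, w_v$ both lie in the $1$-dimensional $T_jF_x$ while $J^\pm_{\mathrm{vert}}$ rotates them by $\pm\pi/2$ out of that line.

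For the converse, I would take $L \subset Z$ of dimension $3$ that is Lagrangian for every $(g_\lambda, J^\pm)$. Forming the sum and difference of the two Lagrangian conditions separates them into $\lambda$-independent statements: for all $v, w \in T_jL$, $g(jv_h, w_h) = 0$ and $\omega_{S^2}(v_v, w_v) = 0$. The sphere isotropy forces the vertical projection $T_jL \to V_j$ to have rank at most $1$, so the horizontal projection $T_jL \to H_j \cong T_xM$ has rank at least $2$; combined with $\omega_j$-isotropy this pins the rank to $2$ and makes the image an $\omega_j$-Lagrangian $2$-plane in $T_xM$, which coincides with $T_x\Sigma$ for $\Sigma := \pi(L)$ and forces $j(T_x\Sigma) = N_x\Sigma$. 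Hence $L \subset L_\Sigma$, and by dimension $L$ is open in $L_\Sigma$, so $L_\Sigma$ itself is locally Lagrangian. Reversing the analysis above, the vertical isotropy now forces the horizontal lift of $T_x\Sigma$ to be tangent to $L_\Sigma$, i.e.\ the Gauss lift is horizontal, which is exactly superminimality of $\Sigma$.

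The main obstacle, invoked in both directions, is the equivalence between superminimality of $\Sigma$ and the vanishing of the transverse (off-fibre) vertical component of $T_jL_\Sigma$. Unpacking it requires comparing the Levi-Civita connection on $Z$ with the second fundamental form of $\Sigma \subset M$: one has to show that the failure of the shape operator to commute with the complex structures on $T\Sigma$ and $N\Sigma$ is precisely what tilts the parallel translate of $j \in L_\Sigma$ off the equator of the new fibre, and that this obstruction coincides with the classical $(2,0)$-part of the second fundamental form whose vanishing defines superminimality.
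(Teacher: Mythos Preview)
Your approach is essentially the paper's: the same definition of $L_\Sigma$, the same use of the $H\oplus V$ splitting to verify the Lagrangian condition, and in the converse the same separation of $\omega^h$ and $\omega^v$ to pin down the horizontal and vertical ranks of $T_jL$. Two points deserve comment.

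First, the intermediate claim that superminimality is ``equivalently, parallel transport along $\Sigma$ preserves the splitting $T\Sigma \oplus N\Sigma$'' is not right: preserving the splitting would mean $II=0$, i.e.\ $\Sigma$ totally geodesic. The correct statement (the paper's Lemma on the equivalence with $\mathrm{Hol}(\nabla|_\Sigma)\subset U(2)$) is that parallel transport preserves $J_0$; elements of $U(2)=\mathrm{Stab}(J_0)$ need not preserve the splitting (e.g.\ the block matrix $\begin{pmatrix}0&I\\-I&0\end{pmatrix}$ commutes with $J_0$ but swaps the two factors). What you actually need, and what does follow, is that $U(2)$ preserves the equator $\{J_\theta\}$ as a set, since it fixes both poles $\pm J_0$.

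Second, you correctly isolate the ``main obstacle'' but propose to unpack it via a second-fundamental-form computation relating the shape operator to the $(2,0)$-part of $II$. The paper bypasses this with a one-line algebraic observation: the subalgebra of $\so(4)$ preserving the equator circle is exactly $\mathfrak{u}(2)=\mathrm{stab}(J_0)$. Hence ``infinitesimal parallel transport preserves the equator'' $\Leftrightarrow$ ``holonomy in $U(2)$'' $\Leftrightarrow$ superminimal, with no appeal to $II$. This also fills the small gap in your converse, where you pass from ``the horizontal lift of $T_x\Sigma$ is tangent to $L_\Sigma$ at $J_\theta$'' directly to ``the Gauss lift $F_0$ is horizontal at $J_0$'': these are statements at different points of the fibre, and it is precisely the equality $\mathrm{stab}(\text{equator})=\mathrm{stab}(J_0)$ that links them.
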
 

From the classification of totally geodesic Lagrangian submanifolds of $\mathbb{F}_{1,2}(\mathbb{C}^3)$ in \cite{Storm2019} it follows that each of these is congruent under the symmetry group of the nearly Kähler structure of $\mathbb{F}_{1,2}(\mathbb{C}^3)$ to one which projects to a superminimal surface. 
With the correspondence of \Cref{thm:Lag and superminimal} we obtain infinitely many new examples of Lagrangian submanifolds of both the Kähler and nearly Kähler structures on $\mathbb{F}_{1,2}(\mathbb{C}^3)$ and $\C P^3$ by relating them to superminimal surfaces.
Finally, it is shown that the submanifolds $L_\Sigma \subset (Z,g_\lambda)$ are minimal submanifolds.

\section{Twistor space\label{sec:twistor fibration}}
In this section we give a short introduction to the twistor fibration over a $4$-manifold.
Let $(M^4,g)$ be an oriented Riemannian $4$-manifold. 
For a given complex structure $J\in \End(T_xM)$ which is compatible with the metric, i.e. $J\in O(T_x M)$, let $\omega_J(-,-) = g(J-,-) \in \Lambda^2 T_x^*M$. 
The complex structure $J$ is said to be compatible with the orientation if $\omega_J\wedge \omega_J$ is equal to the orientation of $M^4$ at $x$, which is denoted as $\omega_J \wedge \omega_J \gg 0$.

\begin{definition} 
The \emph{twistor bundle} $\pi:Z \to M^4$ of an oriented Riemannian manifold $(M^4,g)$ is the bundle whose fiber over a point $x \in M^4$ consists of all complex structures on the vector space $T_x M^4$ which are compatible with the Riemannian metric and the orientation, i.e. 
\[
\pi^{-1}(x) = \{J\in \End(T_x M^4) :J^2= -1,\ J^*g = g \textrm{ and } \omega_J \wedge \omega_J \gg 0 \}.
\]
The fiber is isomorphic to $SO(4)/U(2) \cong \C P^1$.
\end{definition}
Alternatively, the twistor bundle can be defined as the associated bundle of the principal $SO(4)$-frame bundle $\mc F$ of $(M^4,g)$ by
\begin{equation}\label{eq:ass bundle}
Z \cong \mc F\times_{SO(4)} SO(4)/U(2).
\end{equation}
Here  $SO(4)/U(2)$ is identified with all complex structures $J$ on $\R^4$ such that $J \in SO(4)$ and $\omega_J\wedge \omega_J\gg 0$. 
Elements of $Z$ are equivalence classes $[u,J]$, where $u:\mathbb{R}^4\to T_xM$ is a frame and $J$ is a complex structure on $\mathbb{R}^4$.
The equivalence relation is given by $[u\cdot g,J] \sim [u,g\cdot J]$ for $g \in SO(4)$.
The Levi-Civita connection on $M^4$ induces a horizontal subbundle $T^hZ$ which is complementary to the vertical subbundle $T^vZ$ of the tangent space of $Z$, i.e.
\[
TZ = T^v Z\oplus T^h Z. 
\]
The derivative of $\pi$ restricted to $T^hZ$ yields an isomorphism between $T^h_I Z$ and $T_{\pi(I)} M$ for all $I \in Z$.
Two natural almost complex structures on $Z$ are defined by
\begin{equation}\label{eq:Jpm}
J^{\pm}_I = \pm J_{\C P^1} + I \in \End(T_I Z),
\end{equation}  
where $J_{\C P^1}$ is the natural complex structure on the fiber $\C P^1$ and $I\in Z$ is a complex structure on the vector space $T_{\pi(I)} M^4 \cong T^h_I Z$. 
In \cite{Atiyah1978} it is shown that $J^+$ is integrable if and only if $(M^4,g)$ is anti-self-dual, which is an essential ingredient for the classification of instantons of $S^4$ in \cite{Atiyah1978}. Moreover, $J^+$ is conformally invariant; see \cite{Atiyah1978}.
The almost complex structure $J^-$ is never integrable; see \cite{Salamon1985}.

A third point of view on the twistor space is the taken in \cite{Alekseevsky1993}.
Where the twistor space is naturally identified as a quotient of $\mc F$, namely $Z := \mc F/U(2)$, where $U(2) \subset SO(4)$ is the stabilizer subgroup of some fixed complex structure $J_0$ on $\R^4$.
The natural isomorphism $\mc F/U(2)$ to our previous definition of the twistor space is given by
\begin{equation*}
u\cdot U(2) \mapsto [u,J_0]. 
\end{equation*} 
This map does not depend on the representative $u$ of the coset and yields a bundle isomorphism.
Let $\so(5) = \so(4) \oplus \p$ be the Cartan decomposition. 
Let $\mf u(2)\subset \so(4)$ be the stabilizer algebra of $J_0$.
This yields a decomposition 
\begin{equation*}
\so(5) = \mf u(2) \oplus \n \oplus \p,
\end{equation*} 
where $\n\subset \so(4)$ is the orthogonal complement of $\mf u(2)$ with respect to the Killing form. 
Throughout this work $B$ will denote the Killing form on $\so(5)$. 
We will denote $\m := \n \oplus \p$ and $\h := \mf u(2) \subset \so(5)$.
The advantage of this point of view is that $Z$ comes equipped with a principal $U(2)$-bundle $\mc F \to Z$ together with a Cartan connection
\begin{equation*}
\phi: \mc F \to \so(5) = \h \oplus \m,
\end{equation*}
which is just the Levi-Civita connection on the principal frame bundle combined with the soldering form.
The homogeneous space $SO(5)/U(2)$ is an adjoint orbit of a central element $z\in \mc Z(U(2))$. 
This induces a Kähler-Einstein structure on $SO(5)/U(2)$.
The metric at the identity coset is given by
\begin{equation*}
g_K = -2 B|_\n \oplus -B|_\p.
\end{equation*} 
The corresponding symplectic form $\omega_K$ is the Kirillov-Kostant-Souriau symplectic form and is up to a scaling factor given by $\ad(z)$.
The associated complex structure is above denoted by $J^+$.
The almost complex structure $J^-$ is in this setting given by 
\begin{equation*}
J^-|_\p = J^+|_\p\qquad \mbox{and}\qquad J^-|_\n = - J^+|_\n.
\end{equation*} 

More generally, The model space $SO(5)/U(2)$ comes equipped with a natural family of $U(3)$ structures
\begin{equation}\label{eq:metric}
(g_\lambda,J^\pm),
\end{equation} 
where $g_\lambda = -\frac{1}{\lambda^2}B|_\n \oplus -B|_\p$. 
The nearly Kähler structure on $SO(5)/U(2)$ is given by $(SO(5)/U(2),g_1,J^-)$.
These $U(3)$-structures can be pulled back to $Z$ by the Cartan connection.
Moreover, these structures are parallel with respect to this connection.
In the same way it is possible to induce a natural family of $SU(3)$-structures on $Z$. 
Let $\Upsilon$ denote the natural complex volume form induces from the nearly Kähler structure on $SO(5)/U(2)$.
Then 
\begin{equation}\label{eq:SU(3)-str}
(g_\lambda,J^\pm,\lambda \Upsilon)
\end{equation} 
defines a family of $SU(3)$-structures on $Z$.
The results here do not depend on the particular value of $\lambda$.
Some obvious properties of $g_\lambda$ are that $(Z,g_\lambda,J^\pm)$ is an almost Hermitian manifold and the fibers $\pi^{-1}(x)\subset Z$ are totally geodesic submanifolds.

\section{superminimal surfaces\label{sec:superminimal surfaces}}
Let $\Sigma$ be an oriented $2$-dimensional surface in a $4$-dimensional oriented Riemannian manifold $(M^4,g)$.  
Let $N\Sigma\subset TM^4$ be the normal bundle of $T\Sigma$. 
Let $J_0$ be the complex structure on $TM^4|_\Sigma$ defined by a rotation by $\frac{\pi}{2}$ in $T_x\Sigma$ and a rotation by $\frac{\pi}{2}$ in $N_x\Sigma$. 
This determines a lift $F_0:\Sigma \to Z$ of the inclusion $i:\Sigma\to M^4$:
\[
\xymatrix{
		& Z\ar[d]^\pi
		\\
	\Sigma\ar[r]_i\ar[ur]^{F_0}  & M^4
} 
\]
with $\pi \circ F_0 = i$.
\begin{remark}
For an oriented surface $\Sigma \subset M^4$ there is a natural $U(2)$-reduction of the restricted frame bundle $\mc F|_{\Sigma}$. 
Whenever we mention the group $U(2)$ below we are referring to this $U(2)$-reduction given by the complex structure $F_0$ along $\Sigma$.
\end{remark}

Fix an oriented orthonormal local frame $u = (e_1,e_2,e_3,e_4)$ of $TM^4$ such that $(e_1,e_2)$ is an oriented basis of $T\Sigma$. 
The complex structure $J_0$ is determined by $J_0(e_1) = e_2$ and $J_0(e_3)=e_4$.
In the identification of $Z$ with the associated bundle as in \eqref{eq:ass bundle} the lift $F_0:\Sigma \to Z$ is locally given by
\begin{equation}\label{eq:F local}
	F_0(x) =[u(x),J_0],
\end{equation} 
where we make some slight abuse of notation by denoting the complex structure on $\R^4$ corresponding to $J_0$ in the frame $u$ also by $J_0$.
\begin{definition}\label{def:superminimal}
An oriented surface $\Sigma\subset M^4$ is \emph{superminimal} if the vertical component $(\sd F_0)^v$ of $\sd F_0$ vanishes.  
\end{definition}
\begin{remark}
It is easy to see that a superminimal surface is in particular minimal. An oriented minimal surface $\Sigma\subset M^4$ is superminimal if in addition $F_0:\Sigma \to (Z,J^+)$ is holomorphic with respect to the complex structure of $\Sigma$ induced from the conformal structure; see \cite{Friedrich1984}. 
\end{remark}
The following lemma is quite trivial, but nevertheless it is a useful equivalent condition for superminimal surfaces. 
\begin{lemma}\label{lem:equiv superminimal}
For an oriented surface $\Sigma \subset M^4$ the following are equivalent
\begin{enumerate}[(i)]
\item the surface $\Sigma$ is superminimal,
\item the complex structure $F_0$ on $TM|_\Sigma$ is parallel, i.e. $\mathrm{Hol}(\nabla|_\Sigma) \subset U(2)$, where $U(2)$ is defined with respect to $F_0$. 
\end{enumerate}
\end{lemma}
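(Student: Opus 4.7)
The plan is to reduce both conditions to the single statement that $J_0$, viewed as a section of the bundle of metric-and-orientation-compatible complex structures on $TM^4|_\Sigma$, is covariantly constant with respect to the restricted Levi-Civita connection. The bridge is the standard correspondence for associated bundles: if $P \to M$ is a principal $G$-bundle with connection and $F$ is a $G$-space, then a section of $P\times_G F$ is parallel in direction $X$ exactly when the vertical component of its differential (with respect to the induced horizontal distribution) vanishes in that direction. Applying this to $P = \mc F$, $F = SO(4)/U(2)$, and the section $F_0$ will convert (i) into (ii).

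First I would compute $(\sd F_0)^v$ explicitly. Working in the local frame $u = (e_1,e_2,e_3,e_4)$ of \eqref{eq:F local}, so that $F_0(x) = [u(x),J_0]$, the differential $\sd F_0(X)$ splits along $T^h Z \oplus T^v Z$; the horizontal piece corresponds, via $\sd\pi$, to $\sd i(X)$, while the vertical piece is determined by the Levi-Civita connection form $\omega$ on $\mc F$. A short calculation gives
\[
(\sd F_0)^v_x(X) \;=\; \bigl[u(x),\, [\omega(\sd u(X)),\, J_0]\bigr],
\]
where the inner bracket is the adjoint action of $\so(4)$ on $\End(\R^4)$. Under the canonical identification of $T^v_{F_0(x)} Z$ with the space of skew operators on $T_x M$ that anticommute with $F_0(x)$, the right-hand side is exactly $\nabla_X J_0$, with $\nabla$ the connection on $\End(TM^4|_\Sigma)$ induced by Levi-Civita.

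Once this identification is in hand, the two equivalences chain trivially: $(\sd F_0)^v \equiv 0$ on $\Sigma$ iff $\nabla_X J_0 = 0$ for every $X \in T\Sigma$, iff $J_0$ is a $\nabla|_\Sigma$-parallel section of $\End(TM^4|_\Sigma)$. By the holonomy principle, the existence of such a parallel, metric- and orientation-compatible complex structure on $TM^4|_\Sigma$ is equivalent to $\mathrm{Hol}(\nabla|_\Sigma)$ being contained in the stabilizer of $J_0$ inside $SO(4)$, which by construction is the $U(2)$ singled out in the preceding Remark.

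The only nontrivial step is the first one: matching $(\sd F_0)^v$ with $\nabla J_0$ through the associated-bundle identifications. This is a standard but slightly delicate bookkeeping computation with connection forms, and once it is done the remainder of the argument is just the holonomy principle for parallel tensor fields, consistent with the authors' comment that the lemma is essentially trivial.
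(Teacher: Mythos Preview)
Your proposal is correct and follows essentially the same approach as the paper: both reduce the question to computing $(\sd F_0)^v$ in the associated-bundle model and then identify its vanishing with the condition that $J_0$ is preserved by the restricted connection. The only difference is cosmetic---the paper carries out the computation \emph{integrally}, writing $u(\gamma(t)) = u^h(\gamma(t))\cdot g(t)$ along a curve so that $(\sd_{\gamma'} F_0)^v = \tfrac{d}{dt}\, g(t)J_0$ and reading off directly that this vanishes iff parallel transport $g(t)$ lies in $\mathrm{stab}(J_0)\cong U(2)$, whereas you do the same computation \emph{infinitesimally} via the connection form $\omega$ and then invoke the holonomy principle; neither route requires anything the other does not.
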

\begin{proof}
Let $\gamma:I\to \Sigma$ be a curve.
We will use the description of $Z$ as associated bundle $\mc F\times_{SO(4)} SO(4)/U(2)$.
Thus the local formula \eqref{eq:F local} gives
\begin{align*}
 (\sd_{\gamma'(t)}F_0)^v &= \left(\frac{d}{dt} [u(\gamma(t)),J_0]\right)^v = \left(\frac{d}{dt}[u^h(\gamma(t))\cdot g(t),J_0]\right)^v \\
   &=  \left(\frac{d}{dt}[u^h(\gamma(t)),g(t)J_0]\right)^v = \frac{d}{dt}g(t)J_0,
\end{align*} 
where $u^h(\gamma(t))$ is a horizontal lift of $\gamma$ and $g(t) \in SO(4)$ is the parallel translation along $\gamma$ with respect to $\nabla|_\Sigma$ in the frame $u$. 
In the last equality we identified the fiber $\pi^{-1}(\gamma(t))$ with the space of complex structures on $\R^4$ via $u^h(\gamma(t))$.
This implies that $g(t) \in \mathrm{stab}(J_0) \cong U(2)\subset SO(4)$ if and only if $(\sd_{\gamma'(t)} F_0)^v = 0$.
Consequently, the holonomy of the $\nabla|_\Sigma$ is contained in $U(2)$ if and only if $\Sigma$ is superminimal.
\end{proof}
There are a couple of interesting other equivalent formulations of superminimal surfaces. 
For example a surface $\Sigma\subset M^4$ is superminimal if and only if the indicatrix, also known as the curvature ellipse, of $\Sigma$ is a circle centered at zero. 
Or alternatively, $\Sigma$ is superminimal if and only if it is negatively oriented-isoclinic. 
This was proven in \cite{Kwietniewski1902} for $M = \R^4$ and the general case is proven in \cite{Friedrich1984}. 
These two properties of surfaces in a $4$-manifold have been studied in the beginning of the 20th century.
The equivalence of these other characterizations to \Cref{def:superminimal} is proven in \cite{Friedrich1984}. 

\section{A correspondence between Lagrangians and superminimal surfaces\label{sec:lag lift}}
Let $\Sigma \subset M^4$ be a superminimal surface. 
Below we will construct a submanifold $L_\Sigma\subset Z$, which is Lagrangian with respect to all of the almost Hermitian structures $(Z,g_\lambda,J^\pm)$.
The restriction of the twistor fibration $\pi|_{L_\Sigma}:L_{\Sigma}\to \Sigma$ will be a circle bundle and the fibers of $\pi|_{L_{\Sigma}}$ are geodesics in $(Z,g_\lambda)$.
In particular $L_{\Sigma}$ is a $\lambda$-ruled Lagrangian submanifold in the sense of \cite{Lotay2011}.

Let $\Sigma$ be a superminimal surface in $(M^4,g)$ and let $N\Sigma\subset TM^4$ denote the normal bundle of $T\Sigma$. 
Let $u=(e_1,e_2,e_3,e_4)$ be an oriented orthonormal local frame of $M$ such that $(e_1,e_2)$ is an oriented orthonormal basis of $T\Sigma$.
Define a submanifold $L_\Sigma$ of $Z$ as a bundle over $\Sigma$ by
\[
L_{\Sigma} \cap \pi^{-1}(x) := \{J\in Z_x(M^4): J(T_x\Sigma) = N_x\Sigma \}.
\]
Note that $\pi|_{L_{\Sigma}} :L_{\Sigma}\to \Sigma$ is a circle bundle and the fibers can in terms of the frame be expressed as the complex structures determined by
\begin{equation}\label{eq:J_theta}
J_\theta(e_1) = \cos(\theta)e_3 + \sin(\theta)e_4,\quad J_\theta(e_2) = \sin(\theta)e_3 - \cos(\theta)e_4,
\end{equation} 
for $\theta\in S^1$. 
Just as we did with $J_0$ we will also denote the complex structure on $\mathbb{R}^4$ corresponding to $J_\theta$ in the frame $u$ by $J_\theta$.
These complex structures at a tangent space $T_xM^4$ for $x\in \Sigma$ can be depicted as in \Cref{fig:cplx_str}. 
\begin{figure}
\begin{center}
\def\svgscale{0.5}
\def\svgwidth{\columnwidth}
\begingroup%
  \makeatletter%
  \providecommand\color[2][]{%
    \errmessage{(Inkscape) Color is used for the text in Inkscape, but the package 'color.sty' is not loaded}%
    \renewcommand\color[2][]{}%
  }%
  \providecommand\transparent[1]{%
    \errmessage{(Inkscape) Transparency is used (non-zero) for the text in Inkscape, but the package 'transparent.sty' is not loaded}%
    \renewcommand\transparent[1]{}%
  }%
  \providecommand\rotatebox[2]{#2}%
  \newcommand*\fsize{\dimexpr\f@size pt\relax}%
  \newcommand*\lineheight[1]{\fontsize{\fsize}{#1\fsize}\selectfont}%
  \ifx\svgwidth\undefined%
    \setlength{\unitlength}{911.85662132bp}%
    \ifx\svgscale\undefined%
      \relax%
    \else%
      \setlength{\unitlength}{\unitlength * \real{\svgscale}}%
    \fi%
  \else%
    \setlength{\unitlength}{\svgwidth}%
  \fi%
  \global\let\svgwidth\undefined%
  \global\let\svgscale\undefined%
  \makeatother%
  \begin{picture}(1,0.35285152)%
    \lineheight{1}%
    \setlength\tabcolsep{0pt}%
    \put(0,0){\includegraphics[width=\unitlength,page=1]{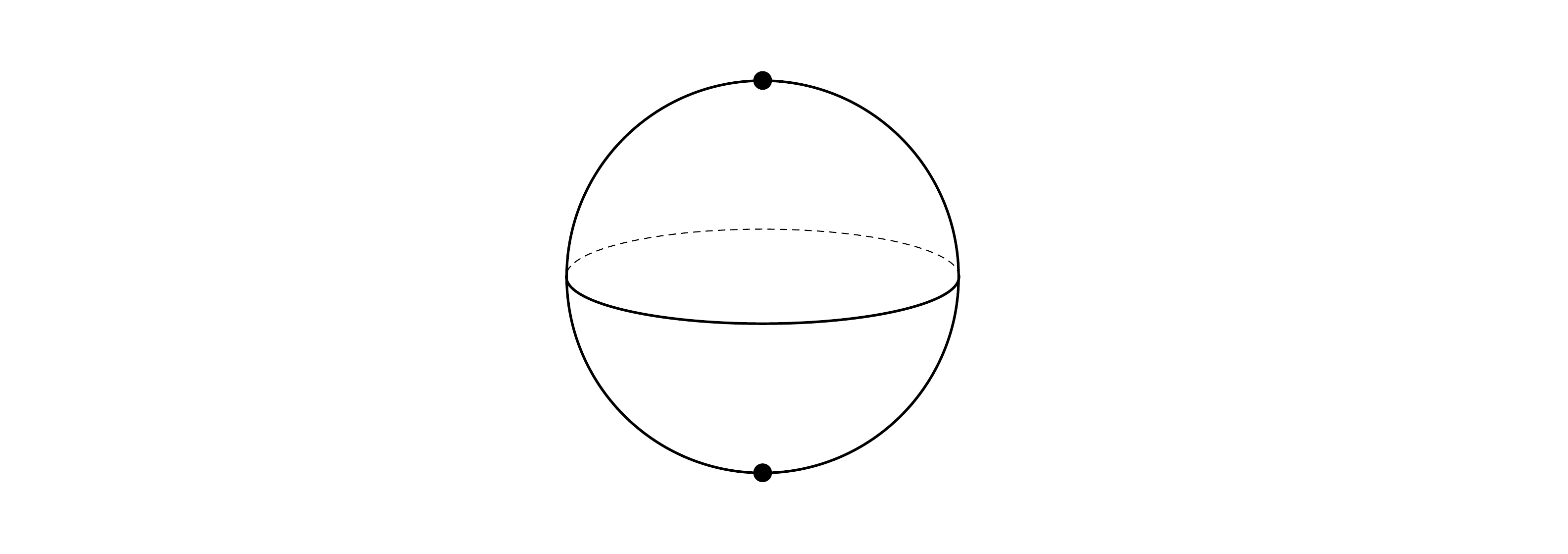}}%
    \put(0.49405221,0.31848505){\color[rgb]{0,0,0}\makebox(0,0)[lt]{\lineheight{1.25}\smash{\begin{tabular}[t]{l}$J_0$\end{tabular}}}}%
    \put(0.62641467,0.17962529){\color[rgb]{0,0,0}\makebox(0,0)[lt]{\lineheight{1.25}\smash{\begin{tabular}[t]{l}$S^1_x = \{J_{\theta}\}_{\theta\in S^1}$\end{tabular}}}}%
    \put(0,0){\includegraphics[width=\unitlength,page=2]{sphere_almost_cplx_str_svg-tex.pdf}}%
    \put(0.48889367,0.01625362){\color[rgb]{0,0,0}\makebox(0,0)[lt]{\lineheight{1.25}\smash{\begin{tabular}[t]{l}$-J_0$\end{tabular}}}}%
  \end{picture}%
\endgroup%

\captionof{figure}{Complex structures of $T_xM^4$ at a point $x\in\Sigma$.\label{fig:cplx_str}}
\end{center}
\end{figure}
\noindent The complex structures $J_\theta$ form the equator of the fiber $\pi^{-1}(x)$ with respect to the poles $\{J_0,-J_0\}$. 

\begin{remark}\label{rem:stabilizer of S1}
The important property of the equator $S^1_x = \{J_\theta\}_{\theta \in S^1}$ of complex structures is that the subalgebra which preserves this equator is exactly the stabilizer algebra of $J_0$, i.e. $\mf u(2)\subset \so(4)$.   
\end{remark}  

\begin{proof}[Proof of \Cref{thm:Lag and superminimal}]
Suppose $\Sigma$ is a superminimal surface. 
Fix an oriented orthonormal local frame $u = (e_1,e_2,e_3,e_4)$ of $TM^4$ such that $(e_1,e_2)$ is an oriented basis of $T\Sigma$. 
First we show that the tangent space of $L_\Sigma$ is compatible with the splitting of $Z$ into the vertical and horizontal subbundles. 
A point of $L_\Sigma$ is in the frame $u$ expressed as $J_\theta$ for some $\theta$.
Note that tangent space of $L_\Sigma$ which is contained in the vertical subbundle of $TZ$ is equal to $T_{J_\theta}S^1_x$, where $\pi(J_\theta) = x \in\Sigma$.
Let $F_\theta:\Sigma\to L_\Sigma\subset Z$ be the local lift given by $F_\theta(y) = [u(y),J_\theta]$.
By \Cref{lem:equiv superminimal} we know that parallel translation in $Z$ along a curve in $\Sigma$ preserves $J_0$. 
Thus by \Cref{rem:stabilizer of S1} it also preserves the subbundle $L_\Sigma \subset Z$. 
From this we obtain that the vertical component of $\sd F_\theta(x)$ is contained in $T_{J_\theta}S^1_x$.  
Clearly $\mbox{im}(\sd F_\theta(x)) \oplus T_{J_\theta} S^1_x = T_{J_\theta} L_\Sigma$ holds.
Consequently, the tangent space of $L_\Sigma$ at $J_\theta$ splits into a vertical and a horizontal subspace, i.e.
\[
T_{J_\theta} L_\Sigma = T^v_{J_\theta} L_\Sigma \oplus  T^h_{J_\theta}L_\Sigma.
\]
Let $X \in T_{J_\theta} L_\Sigma$ and write it as $X = X^v + X^h$, with  $X^v \in T^v_{J_\theta} L_\Sigma$ and $X^h \in T^h_{J_{\theta}}L_\Sigma$ and similarly for $Y\in T_{J_\theta} L_{\Sigma}$. 
We have
\[
g_\lambda(J^{\pm}(X),Y) = \pm g_\lambda(J_{\C P^1}(X^v),Y^v) + g_\lambda(J_{\theta}(X^h),Y^h)= 0, 
\]
where $J_{\C P^1}(X^v)$ is perpendicular to $Y^v$ because $X^v$ and $Y^v$ are linearly dependent and $J_{\theta}(X^h)$ is perpendicular to $Y^h$ because $J_{\theta}$ maps $T\Sigma$ to $N\Sigma$. 
Thus $J^\pm(TL_\Sigma)$ is perpendicular to $TL_\Sigma$ with respect to the metric $g_\lambda$. 
We conclude that $L_\Sigma\subset (Z,g_\lambda,J^\pm)$ is a Lagrangian submanifold with respect to all the almost Hermitian structures.

Conversely, suppose we are given a submanifold $L\subset (Z,g_\lambda)$ which is Lagrangian for both $J^+$ and $J^-$. 
Let $\omega_\pm$ be the Kähler forms of these almost Hermitian structures, i.e. 
\begin{equation*}\label{eq:omega_pm}
\omega_\pm(X,Y) = g_\lambda(J^\pm(X),Y). 
\end{equation*}
Let $\omega_\pm = \omega_\pm^v + \omega_\pm^h$ be the decomposition into their vertical and horizontal parts. 
The vertical and horizontal parts of $\omega_\pm$ satisfy $\omega^v_+ = -\omega^v_-$ and $\omega^h_+ = \omega^h_-$.  
Thus both $2$-forms $\omega^v_+$ and $\omega^h_+$ vanish on $L$. 
The projection of $T_IL$ on the horizontal distribution $T^h_IZ$ is a Lagrangian subspace for every $I\in L$ with respect to the restricted almost Hermitian structure, because $\omega^h_+$ vanish on $L$. 
Thus this projection necessarily has a non-trivial kernel. 
The projection of $T_IL$ onto the vertical distribution $T^v_IZ$ can be at most $1$-dimensional, because $\omega^v_+$ vanishes on $L$. 
Consequently, the projection of $TL$ onto the vertical distribution is equal to its intersection with the vertical distribution and the splitting
\[
T_IL = T^v_IL \oplus T^h_I L
\]
holds. 
In particular $\Sigma:=\pi(L)\subset M^4$ is a $2$-dimensional surface. 
As before we fix some oriented orthonormal local frame $u = (e_1,e_2,e_3,e_4)$ on $M^4$ such that $(e_1,e_2)$ is an oriented basis of $T\Sigma$. 
A complex structure in $L$ is then of the form $J_\theta$ as in \eqref{eq:J_theta} for some $\theta$,
because $L$ is Lagrangian.
A lift of $\pi|_L:L\to \Sigma$ is given by $F_\theta=[u,J_\theta]:\Sigma\to L$. 
This implies $(\sd F_\theta)^v\in T^v_{J_{\theta}}L = T_{J_\theta}S^1_x$ and by \Cref{rem:stabilizer of S1} this implies that the holonomy group of $\nabla|_\Sigma$ is contained in $U(2)$. Therefore, by \Cref{lem:equiv superminimal} we obtain that $\Sigma$ is superminimal.  
\end{proof}

\subsection{Minimal Submanifolds}
For Lagrangian submanifolds in nearly Kähler manifolds it is known that they are minimal submanifolds; see \cite{Gutowski2003}.
Apart from these two homogeneous nearly Kähler twistor spaces the almost Hermitian manifolds $(Z,g_\lambda,J^\pm)$ are never nearly Kähler. 
However, in this section we will show that the Lagrangians constructed in \Cref{sec:lag lift} are minimal submanifolds for all of these spaces $(Z,g_\lambda)$ simultaneously.

In this section we denote the Levi-Civita connection on $Z$ by $\nabla^g$, where the index $\lambda$ is excluded from the notation.
The connection on $Z$ obtained from the Cartan connection $\phi:T\mc F \to \so(5) = \h \oplus \m$ on $\mc F \to Z$ is denoted by $\nabla$; see also \Cref{sec:twistor fibration} and \cite{Alekseevsky1993} for more details.
Furthermore, let $A = \nabla^g - \nabla$.
We will use $\nabla$ to compute the mean curvature.
Let 
\begin{equation*}
B_0 :=\frac{1}{\sqrt{2}} \begin{pmatrix} 1  & 0 & 0 & -1 \\ 0 & 1 & -1  & 0 \\ 0 & 1 & 1 & 0 \\ 1 & 0 & 0 & 1 \end{pmatrix}
\end{equation*} 
and $B_\theta := \exp(\theta J_0) B_0$ for $\theta \in S^1$. 
These matrices are chosen such that $B_\theta  J_0  B_\theta^{-1}= J_\theta$. 
Let $u$ be an adapted frame along $\Sigma$ as before.
We define a lift $\hat{u}: L_\Sigma \to \mc F$ of $\pi:\mc F \to Z$ by
\begin{equation*}
\hat{u}([u(x),J_\theta]) = u(x) \cdot B_\theta.
\end{equation*} 
This is a local lift, because
\begin{equation*}
\pi(\hat{u}([u(x),J_\theta])) = [u(x)\cdot B_\theta, J_0] = [u(x),B_\theta\cdot J_0] = [u(x), J_\theta].
\end{equation*} 
Let us denote $p_{x,\theta} := \hat{u}([u(x),J_\theta])$. 
Let $R_h:\mc F \to \mc F$ denote the right principal bundle action by an element $h\in SO(4)$.
We define an orthonormal frame of $L_\Sigma$ as follows.
Let
\begin{equation*}
\hat{v}_3(p_{x,\theta}) = \frac{\lambda}{\sqrt{12}} \frac{d}{d \theta} R_{B_\theta}(u(x)) \in T_{p_{x,\theta}} \mc F
\end{equation*}   
and for $i=1,2$ we let
\begin{equation*}
\hat{v}_i (p_{x,\theta}) = {R_{B_\theta}}_* u_*e_i \in T_{p_{x,\theta}}\mc F. 
\end{equation*}  
Furthermore, if we put $v_i = \pi_* \hat{v}_i$, then $(v_1,v_2,v_3)$ is an orthonormal frame for $L_\Sigma$ with respect to the metric $g_\lambda$.
The vector $v_3$ is vertical for the bundle $Z \to M$ and the vectors $v_1$ and $v_2$ are horizontal.

The mean curvature will by computed by
\begin{equation*}
\sum_{i=1}^3 (\nabla_{v_i}v_i)^\perp = \sum_{i=1}^3 (\nabla^g_{v_i}v_i)^\perp + (A(v_i)v_i)^\perp.
\end{equation*} 
We will show that $\sum_{i=1}^3 (\nabla_{v_i}v_i)^\perp = \sum_{i=1}^3( A(v_i)v_i)^\perp = 0$ and thus that the mean curvature vector of $L_\Sigma \subset (Z,g_\lambda)$ vanishes. 

First we consider $\sum_{i=1}^3 (\nabla_{v_i}v_i)^\perp$. 
We identify a vector field $X$ on $Z$ with its $U(2)$-equivariant function $\Psi(X):\mc F \to \m$, where $\m$ is defined in \Cref{sec:twistor fibration}.
The covariant derivative $\nabla$ can be expressed in terms of the Cartan connection by
\begin{equation*}
\Psi({\nabla_{v_i}v_i}) = \sd_{\hat{v}_i}\phi_\m(\hat{v}_i) + \phi_\h(\hat{v}_i)\cdot \phi_\m(\hat{v}_i),
\end{equation*} 
where $\cdot $ denotes the restricted adjoint action of $\h$ on $\m$.
First of all we have $\phi(\hat{v}_3) = \frac{\lambda}{\sqrt{12}} \Ad(B_0)^{-1}\cdot J_0 \in \n$, where $J_0$ is seen as an element of $\h \subset \so(5)$.
This is a constant function and $\phi_\h(\hat{v}_3) = 0$, thus we find $\nabla_{v_3}v_3 = 0$.
Furthermore, 
\begin{align*}
\sum_{i=1}^2 \Psi(\nabla_{v_i}v_i)^\perp &= \sum_{i=1}^2 (\sd_{\hat{v}_i} \phi_{\m}( {R_{B_{\theta}}}_* u_* e_i) + \phi_{\h}({R_{B_\theta}}_* u_*e_i)\cdot \phi_{\m}({R_{B_\theta}}_* u_*e_i))^\perp\\
 &= \sum_{i=1}^2 (\Ad(B_\theta)^{-1}\sd_{\hat{v}_i} \phi_{\m}(u_* e_i) + \Ad(B_\theta)^{-1}\phi_{\h}(u_*e_i) \Ad(B_\theta) \cdot \phi_{\m}({R_{B_\theta}}_* u_*e_i))^\perp\\
 &= \left(\Ad(B_\theta)^{-1}\sum_{i=1}^2 \sd_{\hat{v}_i} \phi_{\m}(u_* e_i) + \phi_{\h}(u_*e_i)\cdot \phi_{\m}(u_*e_i)\right)^\perp\\
 &= 0,
\end{align*} 
where $B_\theta \in O(4)$ and in the last equality the sum expresses the mean curvature vector of $\Sigma \subset M$ which is zero, because $\Sigma$ is superminimal.

Next we consider $\sum_{i=1}^3 A(v_i)v_i$. 
The connection form $A$ with respect to the Levi-Civita connection can be expressed in terms of the torsion $T$ of $\nabla$ as
\begin{equation*}
g_\lambda(A(X)Y,Z) = \frac{1}{2}\left( T(X,Y,Z) - T(Y,Z,X) + T(Z,X,Y)\right). 
\end{equation*} 
Thus we find
\begin{align*}
\sum_{i=1}^3 g_\lambda(A(v_i)v_i,Z) &=\sum_{i=1}^3 \frac{1}{2}(T(v_i,v_i,Z) - T(v_i,Z,v_i) + T(Z,v_i,v_i))\\
 &=\sum_{i=1}^3 T(Z,v_i,v_i). 
\end{align*} 
The torsion tensor $T$ is identified with an equivariant function $\hat{T}:\mc F \to \Lambda^2 \m^* \otimes \m$, which is given by
\begin{equation*}
\hat{T} = \kappa_\m - t,
\end{equation*} 
where $\kappa_\m$ is the $\m$-component of the curvature function of $\phi$ and $t$ is the constant function which takes the value $g_\lambda (t(x,y),z) = g_\lambda([x,y],z)$.
The torsion of the Cartan connection is given by
\begin{equation*}
\kappa_\m = \kappa_\n + \kappa_\p = R_\n - [\phi_\n,\phi_\p],
\end{equation*} 
where $\kappa_\n = R_\n$ is the Riemannian curvature tensor of $(M^4,g)$ followed by a projection onto $\n$; see also \cite{Alekseevsky1993}. 

\begin{lemma}\label{lem:computation for A}
If either $X \in \n$ or $X \in \p$, then for all $Z \in \m$ the following hold:
\begin{enumerate}[(i)]
 \item $g_\lambda(t(Z,X),X) = 0$,
 \item  $g_\lambda(R(Z,X)_\n,X) = 0$,
 \item $g_\lambda([\phi_n(Z),\phi_\p(X)] - [\phi_\n(X),\phi_\p(Z)],X)=0$.
\end{enumerate}
\end{lemma}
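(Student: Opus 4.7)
The plan is to reduce all three identities to the Lie bracket structure of the decomposition $\so(5) = \h \oplus \n \oplus \p$ together with $\Ad$-invariance of the Killing form $B$. The ingredients I will use are: $g_\lambda$ restricts to a (negative) multiple of $B$ on each of $\n$ and $\p$; the splitting is $B$-orthogonal, so in particular $g_\lambda(\n, \p) = 0$; the bracket grading $[\n, \n] \subset \h$, $[\p, \p] \subset \h \oplus \n$, $[\n, \p] \subset \p$; and the classical identity $B([Y, X], X) = B(Y, [X, X]) = 0$.

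For (i), I will unpack $t(Z, X) = [Z, X]_\m$ and split $Z = Z_\n + Z_\p$. In each of the two cases $X \in \n$ and $X \in \p$, the bracket grading instantly places the summands $[Z_\n, X]$ and $[Z_\p, X]$ into $\h$, $\n$, or $\p$. The pieces lying in the subspace $g_\lambda$-orthogonal to $X$ drop out, while the remaining piece lies in the same summand as $X$; there $\Ad$-invariance of $B$ gives the vanishing.

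For (ii), the key point is that $R$ is the Riemannian curvature of $(M^4, g)$ and therefore depends only on horizontal inputs, i.e.\ on the $\p$-components: $R(Z, X)_\n = R(Z_\p, X_\p)_\n$. If $X \in \n$ then $X_\p = 0$ and the expression vanishes outright; if $X \in \p$ then $R(Z, X)_\n \in \n$ is automatically $g_\lambda$-orthogonal to $X$. For (iii), the antisymmetric tensor $E(Z, X) := [\phi_\n(Z), \phi_\p(X)] - [\phi_\n(X), \phi_\p(Z)]$ reduces, for $X$ in either $\n$ or $\p$, to a single commutator of the form $[\phi_\n(Z), X]$ or $-[X, \phi_\p(Z)]$, because only one of $\phi_\n(X), \phi_\p(X)$ is nonzero and the other equals $X$. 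This commutator lies in $[\n, \p] \subset \p$, which is $g_\lambda$-orthogonal to $X \in \n$ in one case, and is killed by $\Ad$-invariance of $B$ exactly as in (i) in the other.

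The arguments are really just bookkeeping once the bracket grading is set up, so I do not expect a serious technical obstacle. The only point that needs some conceptual care is the interpretation in (ii): one has to confirm that the definition $\kappa_\n = R_\n$ really does extend $R$ by zero on arguments coming from $\n$, which is immediate from the fact that the Riemannian curvature on the base $M^4$ is defined only on horizontal tangent vectors, corresponding to $\p$ in the Cartan picture.
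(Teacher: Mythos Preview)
Your proposal is correct and follows essentially the same approach as the paper: the bracket grading $[\n,\n]\subset\h$, $[\p,\p]\subset\h\oplus\n$, $[\n,\p]\subset\p$, the $g_\lambda$-orthogonality of $\n$ and $\p$, and $\Ad$-invariance of $B$ (equivalently, the total skewness of $t_1$) are exactly the ingredients the paper uses. For (ii) you reproduce the paper's argument verbatim, and for (iii) you actually spell out what the paper dismisses with ``by a similar argument we obtain (iii) holds''; your reduction to a single commutator in $[\n,\p]\subset\p$ is precisely the intended computation.
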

\begin{proof}
For \emph{(i)} we define $t_\lambda\in \bigotimes^3 \m^*$ by $t_\lambda(X,Y,Z) = g_\lambda([X,Y],Z)$.
Note that $t_1 \in \Lambda^3 \m^*$, because $g_1$ is equal to minus the Killing form of $\so(5)$ and thus invariant under the adjoint action.
Suppose $Z \in \n$ and $X \in \n$, then $t_\lambda(Z,X,X) = \lambda t_1(Z,X,X) = 0.$ 
If $Z \in \p$ and $X\in \n$, then $[Z,X]_\m \in \p$ and thus $t_\lambda(Z,X,X) = 0$.
Similarly, for $Z \in \p$ and $X\in \p$ we have  $[Z,X]_\m \in \n$ and thus $t_\lambda(Z,X,X) = 0$.
Lastly, if $Z \in \n$ and $X\in \p$, then $t_\lambda(Z,X,X) = t_1(Z,X,X) = 0$.
By a similar argument we obtain \emph{(iii)} holds.

The Riemannian curvature tensor is a horizontal tensor on $\mc F$, thus $R(X,Y) = 0$ if either $X\in  \n$ or $Y \in \n$.
From this \emph{(ii)} follows.
\end{proof}

From this lemma we obtain
\begin{equation*}
\sum_{i=1}^3 T(Z,v_i,v_i) = 0,
\end{equation*} 
for all $Z$.
We conclude that the Lagrangian submanifold $L_\Sigma \subset (Z,g_\lambda)$ is a minimal submanifold.
\begin{remark}
A $k$-form $\alpha$ which satisfies $\alpha(\eta)\leq 1$ for every normalized $k$-multivector is called a generalized calibration; see \cite{Gutowski2003}. 
A $k$-dimensional submanifold $L$ which satisfies $\alpha(\eta_x) = 1$ for every $x\in L$ is called calibrated, where $\eta_x$ is a normalized top degree multivector on $T_x L$.
If in addition the form $\alpha$ satisfies $\sd \alpha =0$, then it is called a calibration form.
Calibrated submanifolds of calibrations are automatically volume minimizing within their homology class; see \cite{Harvey1982}.
An interesting phenomenon occurs for calibrated submanifolds of nearly Kähler manifolds.
In \cite{Gutowski2003} it is shown that special Lagrangian submanifolds are automatically minimal and in \cite{Lubbe2014} it is shown that pseudo-holomorphic curves are also minimal.
Furthermore, there are no pseudo-holomorphic surfaces of (strict) nearly Kähler structures. 
Thus every calibrated submanifold of a natural generalized calibration form of a nearly Kähler manifold is minimal.
We have shown that the Lagrangian submanifolds $L_\Sigma$, which we constructed here are also minimal. 
It might be interesting to point out that they are also calibrated by the complex volume form $\lambda \Upsilon$, which was defined in \eqref{eq:SU(3)-str} and is a generalized calibration.
\end{remark}


\end{document}